\documentclass[mathpazo]{csam}

\newcommand{\ds}{\displaystyle}
\usepackage{color}
\begin{document}
\title{Mathematical and numerical analysis to shrinking-dimer saddle dynamics with local Lipschitz conditions}


 \author[L. Zhang et~al.]{Lei Zhang\affil{1},
       Pingwen Zhang\affil{2}~and Xiangcheng Zheng\affil{3}\comma\corrauth}
 \address{\affilnum{1}\ Beijing International Center for Mathematical Research, Center for Quantitative Biology, Peking
University, Beijing 100871, China. \\
           \affilnum{2}\ School of Mathematical Sciences, Laboratory of Mathematics and Applied Mathematics, Peking
University, Beijing 100871, China.
     \\
           \affilnum{3}\ School of Mathematical Sciences, Peking University, Beijing 100871, China. }
 \emails{{\tt zhangl@math.pku.edu.cn} (L.~Zhang), {\tt pzhang@pku.edu.cn} (P.~Zhang),
          {\tt zhengxch@math.pku.edu.cn} (X.~Zheng)}

\begin{abstract}
We present a mathematical and numerical investigation to the shrinking-dimer saddle dynamics for finding any-index saddle points in the solution landscape. Due to the dimer approximation of Hessian in saddle dynamics, the local Lipschitz assumptions and the strong nonlinearity for the saddle dynamics, it remains challenges for delicate analysis, such as the the boundedness of the solutions and the dimer error.
We address these issues to bound the solutions under proper relaxation parameters, based on which we prove the error estimates for numerical discretization to the shrinking-dimer saddle dynamics by matching the dimer length and the time step size. Furthermore, the Richardson extrapolation is employed to obtain a high-order approximation.

 The inherent reason of requiring the matching of the dimer length and the time step size lies in that the former serves a different mesh size from the later, and thus the proposed numerical method is close to a fully-discrete numerical scheme of some space-time PDE model with the Hessian in the saddle dynamics and its dimer approximation serving as a ``spatial operator'' and its discretization, respectively, which in turn indicates the PDE nature of the saddle dynamics. 
\end{abstract}

\ams{37M05, 37N30, 65L20
}
\keywords{Saddle dynamics, solution landscape, saddle points, local Lipschitz condition, error estimate, Richardson extrapolation.}
\maketitle

\section{Introduction}
One of the major challenges in computational physical and chemistry is how to efficiently calculate saddle points on a complicated energy landscape. In comparison with finding local minima, the computation of saddle points is generally more difficult due to their unstable nature. Nevertheless, saddle points provide important information about the physical and chemical properties. For instance, the index-1 saddle point represents the transition states connecting two local minima according to the transition state theory \cite{Mehta,npj2016}, and the index-2 saddle points are particularly interesting in chemical systems for providing valuable information on the trajectories of chemical reactions \cite{Heidrich1986}. The applications of saddle points include nucleation in phase transformations \cite{EV2010,ZhangChe,ZhangCheDu}, transition rates in chemical reactions and computational biology \cite{Han2019transition,HanXu,Nie2020,wang2021modeling,Yin2020nucleation}, etc.

The saddle points can be classified by the (Morse) index, which is characterized by the maximal dimension of a subspace on which the Hessian $H(x)$ is negative definite, according to the Morse theory \cite{Milnor}. 
Most existing searching algorithms focus on finding the index-1 saddle points, e.g. \cite{All,Doye,EZho,Farr,Gou,Henke,Lev,
ZhaDu}. However, the computation of high-index (index$>1$) saddle points receive less attention despite of the fact that the number of high-index saddles are much larger than the number of local minima and index-1 saddles on the complicated energy landscapes \cite{Chen2004,Li2001}.

The original saddle dynamics (SD) aims to find an index-k ($1\leq k\in\mathbb N$) saddle point of an energy function $E(x)$ \cite{YinSISC}
\begin{equation}\label{Sadk}
\left\{
\begin{array}{l}
\ds \frac{dx}{dt} =\beta\bigg(I -2\sum_{j=1}^k v_jv_j^\top \bigg)F(x),\\[0.075in]
\ds \frac{dv_i}{dt}=\gamma \bigg( I-v_iv_i^\top-2\sum_{j=1}^{i-1}v_jv_j^\top\bigg)H(x)v_i,~~1\leq i\leq k.
\end{array}
\right.
\end{equation}
Here the natural force $F:\mathbb R^N\rightarrow \mathbb R^N$ is generated from an energy $E(x)$ by $F(x)=-\nabla E(x)$, $H(x):=-\nabla^2 E(x)$ corresponds to the Hessian of $E(x)$, $\beta$, $\gamma>0$ are relaxation parameters, $x$ represents the position variable and direction variables $\{v_i\}_{i=1}^k$ form a basis for the unstable subspace of the Hessian at $x$. 

Because the Hessians are often expensive to calculate and store, one can apply first derivatives to approximate the Hessians in Eq. \eqref{Sadk} by using $k$ dimers centered at $x$. To be specific, $H(x) v_i$ is approximated by
\begin{equation}\label{Hzz}
\hat H(x,v_i,l):=\frac{F(x+lv_i)-F(x-lv_i)}{2l}
\end{equation}
with the direction $v_i$ and the dimer length $2l$ for some $l>0$. 

Following the idea of the shrinking dimer dynamics \cite{ZhaDu, ZhaSISC}, we obtain the shrinking-dimer saddle dynamics (SSD)\cite{YinSISC} as follows
\begin{equation}\label{sadk}
\left\{
\begin{array}{l}
\ds \frac{dx}{dt} =\beta\bigg(I -2\sum_{j=1}^k v_jv_j^\top \bigg)F(x),\\[0.075in]
\ds \frac{dv_i}{dt}=\gamma \bigg( I-v_iv_i^\top-2\sum_{j=1}^{i-1}v_jv_j^\top\bigg)\hat H(x,v_i,l),~~1\leq i\leq k,\\
\ds \frac{dl}{dt}=-l.
\end{array}
\right.
\end{equation}

By using the SSD method as a key ingredient, the solution landscape can be constructed by connecting the high-index saddle points to low-index saddle points and local minima \cite{YinPRL,YinSCM}. The solution landscape serves as an efficient approach to provide a global structures of all stationary points of the model systems and has been widely applied in various fields \cite{HanYin, Han2021,Xu_PRE,YinPRL,Yin2020nucleation, Yin2022, YuZhaZha}. 

Despite the growing applications of the SSD, the corresponding mathematical and numerical analysis are still far from well-developed. Most existing works only focus on the numerical analysis of the index-1 SD in recent years  \cite{Gao,Gou,Lev,ZhaDu}, which corresponds to (\ref{Sadk}) with $k=1$, and the corresponding results for (shrinking-dimer) high-index SD are meager. 
In a very recent work \cite{Z3}, numerical discretization to SD (\ref{Sadk}) was analyzed, the proof of which depends heavily on the global Lipschitz assumptions of both $F(x)$ and $H(x)$. However, $F(x)$ and $H(x)$ generally have complex nonlinear forms that only admit local Lipschitz conditions. Furthermore, to avoid the direct calculation of Hessians, the SSD is usually used instead of the SD in practice. But, the dimer approximation of $H(x)$ in Eq. (\ref{sadk}) introduces additional errors, which generate significant differences from the numerical analysis of the SD \cite{Z3} and lead to the failure of the error estimates therein. Moreover, as the dimer length serves like a ``step size'' in the dimer approximation, it needs to be carefully chosen in order to match the time step size. 

Motivated by these discussions, in this work we aim to prove the boundedness of the exact solutions and optimal-order error estimates of the numerical discretization to the SSD (\ref{sadk}) with respect to the time step size. Due to the strong nonlinearity of the system and the local Lipschitz conditions, the boundedness of solutions is proved under some restrictions of the relaxation parameters (cf. (\ref{beta})). Based on the proposed first-order scheme, the Richardson extrapolation is further developed to obtain a high-order approximation. As the dimer length serves as a different mesh size from the time step size, the proposed numerical method is close to a fully-discrete numerical scheme of some space-time PDE model with the Hessian and its dimer approximation serving as a `` spatial operator '' and its discretization, respectively, which in turn indicates the PDE nature of the saddle dynamics. 

The rest of the paper is organized as follows: In Section 2 we estimate $x(t)$ and $\{v_i(t)\}_{i=1}^k$ in (\ref{sadk}) under local Litschitz conditions, which supports the subsequent numerical analysis. In Section 3 we present the numerical scheme of the SSD (\ref{sadk}) and prove auxiliary estimates for the sake of the error estimates for the numerical discretization in Section 4. We also propose the Richardson extrapolation in this section to obtain a high-order approximation. In Section 5 we extend the developed techniques to numerically analyze the generalized SSD for non-gradient systems. Numerical experiments are performed in Section 6 and we finally address a conclusion in the last section.

\section{Estimate of solutions under local Lipschitz conditions}\label{secxbnd}
In this section we consider the SSD (\ref{sadk}) on $[0,T]$, closed by the following initial conditions \begin{equation}\label{inicondk}
\ds x(0)=x_0,~~v_i(0)=v_{i,0}\text{ for }1\leq i\leq k,~~v_{i,0}^\top v_{j,0}=\delta_{i,j}, ~~l(0)=l_0.
\end{equation}
We will show that, under the local Lipschitz conditions of $F(x)$ and $H(x)$, $x(t)$ and $\{v_i(t)\}_{i=1}^k$ with $0\leq t\leq T$ are bounded (under suitable relaxation parameters) such that in subsequent proofs, we could use the Lipschitz continuity of $F(x)$ and $H(x)$ with a fixed Lipschitz constant, just like imposing the global Lipschitz conditions as in \cite{Z3}. Furthermore, the boundedness of $\{v_i(t)\}_{i=1}^k$ will be used in numerical analysis. 

Let $\|\cdot\|$ be the standard $l^2$ norm of the matrix or the vector. For the sake of the analysis, we make the following assumption throughout the paper:\\
\textbf{Assumption $\mathcal A$}: $F(x)$ and $H(x)$ satisfy local Lipschitz conditions, that is, for any $r>0$ there exists a constant $L_r>0$ such that for $x_1,x_2\in B_r:=\{x\in \mathbb R^N:\|x\|\leq r\}$
\begin{equation}\label{Lip}
 \|F(x_2)-F(x_1)\|\leq L_r\|x_2-x_1\|,~~\|H(x_2)-H(x_1)\|\leq L_r\|x_2-x_1\|. 
 \end{equation}
\subsection{Properties of auxillary functions}
Based on the Assumption $\mathcal A$, we derive some important properties for the following nonlinear functions
 $$\begin{array}{l}
 \ds  X(x,v_1,\cdots,v_k):=\bigg(I -2\sum_{j=1}^k v_jv_j^\top \bigg)F(x),\\[0.15in]
 \ds V_i(x,v_1,\cdots,v_k,l):=\bigg( I-v_iv_i^\top-2\sum_{j=1}^{i-1}v_jv_j^\top\bigg)\hat H(x,v_i,l),~~1\leq i\leq k,
\end{array}  $$
which are indeed right-hand side terms of (\ref{sadk}) without relaxation parameters, in the following theorem.
\begin{theorem}
Under the Assumption $\mathcal A$, for any fixed $r>0$ there exist positive constants $Q_0=Q_0(r)$ and $Q_2=Q_2(r)$ depending on $r$, $L_r$, $k$, $l_0$, $F$ and $H$ such that for $(x,v_1,\cdots,v_k)$, $(\bar x,\bar v_1,\cdots,\bar v_k)\in\mathcal B_r$
\begin{equation}\label{thme1}
\begin{array}{l}
\ds \|X(x,v_1,\cdots,v_k)-X(\bar x,\bar v_1,\cdots,\bar v_k)\|\\[0.05in]
\ds\quad\leq Q_0(r)\big\|x-\bar x,v_1-\bar v_1,\cdots,v_k-\bar v_k\big\|,\\[0.05in]
 \ds\|V_i(x,v_1,\cdots,v_k,l)-V_i(\bar x,\bar v_1,\cdots,\bar v_k,l)\|\\[0.05in]
 \ds\quad\leq Q_2(r)\big(\big\|x-\bar x,v_1-\bar v_1,\cdots,v_k-\bar v_k\big\|+l_0^2\big),~~1\leq i\leq k.
\end{array}
\end{equation}
Here the convex set $\mathcal B_r$ and the norm $\|x,v_1,\cdots,v_k\|$ are defined by 
 $$\mathcal B_r:=\bigg\{(x,v_1,\cdots,v_k):\|x,v_1,\cdots,v_k\|:=\bigg(\|x\|^2+\sum_{i=1}^k\|v_i\|^2\bigg)^{1/2}\leq r\bigg\}.$$
\end{theorem}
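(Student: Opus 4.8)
The plan is to reduce both bounds to a handful of elementary facts on a fixed ball and then to a single non-trivial cancellation. Since the third equation of (\ref{sadk}) gives $l(t)=l_0e^{-t}$, along any solution $0<l\le l_0$, so for $(x,v_1,\cdots,v_k)\in\mathcal B_r$ every argument of $F$ or $H$ that will occur — namely $x$, $\bar x$ and the shifted points $x\pm lv_i$, $\bar x\pm l\bar v_i$ — lies in the \emph{fixed} ball $B_{r'}$ with $r':=(1+l_0)r$. I would therefore first record: (i) on $B_{r'}$ the functions $F$ and $H$ are Lipschitz with the single constant $L_{r'}$ and bounded, say $\sup_{B_{r'}}\|F\|\le\|F(0)\|+L_{r'}r'=:C_F$ and $\sup_{B_{r'}}\|H\|=:C_H$; (ii) the rank-one maps are Lipschitz on $\mathcal B_r$, $\|v_jv_j^\top-\bar v_j\bar v_j^\top\|\le 2r\|v_j-\bar v_j\|$; (iii) the two projector-type matrices $I-2\sum_j v_jv_j^\top$ and $P_i(v):=I-v_iv_i^\top-2\sum_{j<i}v_jv_j^\top$ are bounded on $\mathcal B_r$ by $1+2r^2$, since $\|v_i\|^2+2\sum_{j<i}\|v_j\|^2\le 2\sum_j\|v_j\|^2\le 2r^2$.

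For the first inequality I would use the splitting $X(x,v)-X(\bar x,\bar v)=\big(I-2\sum_j v_jv_j^\top\big)(F(x)-F(\bar x))-2\big(\sum_j(v_jv_j^\top-\bar v_j\bar v_j^\top)\big)F(\bar x)$, bound the first term by $(1+2r^2)L_{r'}\|x-\bar x\|$ via (i) and (iii), bound the second term by $4r\,C_F\sum_j\|v_j-\bar v_j\|$ via (i)--(ii), and finish with Cauchy--Schwarz, $\sum_j\|v_j-\bar v_j\|\le\sqrt k\,\|x-\bar x,v_1-\bar v_1,\cdots,v_k-\bar v_k\|$, together with $\|x-\bar x\|\le\|x-\bar x,v_1-\bar v_1,\cdots\|$; this yields $Q_0(r)=(1+2r^2)L_{r'}+4\sqrt k\,r\,C_F$.

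The second inequality follows the same scheme with $P_i(v)$ in place of the projector and $\hat H(x,v_i,l)$ in place of $F(x)$: the projector-difference term is controlled exactly as above, and $\|\hat H(\bar x,\bar v_i,l)\|\le L_{r'}\|\bar v_i\|\le L_{r'}r$ is immediate from $\|F(\bar x+l\bar v_i)-F(\bar x-l\bar v_i)\|\le 2l\,L_{r'}\|\bar v_i\|$. The one delicate point is to estimate $\hat H(x,v_i,l)-\hat H(\bar x,\bar v_i,l)$ while resisting the apparent $l^{-1}$ blow-up. Here I would write each finite difference in the dimer through the integral mean value identity $F(p)-F(\bar p)=\big(\int_0^1 H(\bar p+t(p-\bar p))\,dt\big)(p-\bar p)$ (valid since $\nabla F=H$), once with $p=x+lv_i$, $\bar p=\bar x+l\bar v_i$ and once with $p=x-lv_i$, $\bar p=\bar x-l\bar v_i$, to obtain $\hat H(x,v_i,l)-\hat H(\bar x,\bar v_i,l)=\frac{1}{2l}(M_a-M_b)(x-\bar x)+\frac12(M_a+M_b)(v_i-\bar v_i)$, where $M_a,M_b$ are the integral means of $H$ along the two segments joining $x\pm lv_i$ to $\bar x\pm l\bar v_i$. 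The crucial observation is that these two segments are $O(l)$-close — their endpoints differ by $2l\bar v_i$ and by $2l(v_i-\bar v_i)$, so the point at parameter $t$ is $2l[(1-t)\bar v_i+tv_i]$, of norm $\le 2lr$ — hence $\|M_a-M_b\|\le 2L_{r'}l\,r$; the factor $l^{-1}$ then cancels, leaving $\|\frac{1}{2l}(M_a-M_b)(x-\bar x)\|\le L_{r'}r\|x-\bar x\|$, while $\|\frac12(M_a+M_b)(v_i-\bar v_i)\|\le C_H\|v_i-\bar v_i\|$ by (i). Assembling the pieces gives a bound $\le Q_2(r)\|x-\bar x,v_1-\bar v_1,\cdots,v_k-\bar v_k\|$, which a fortiori implies the stated inequality (the additive $l_0^2$ is slack for this estimate and is kept only for uniformity with the error analysis of Sections~3--4, where the consistency residual $\|\hat H(x,v_i,l)-H(x)v_i\|\le\frac12 L_{r'}l\|v_i\|^2$ enters).

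I expect the main obstacle to be exactly the $l^{-1}$ factor in $\hat H(x,v_i,l)-\hat H(\bar x,\bar v_i,l)$: a naive Lipschitz estimate of the numerator produces a bound of order $l^{-1}\|x-\bar x\|$, which degenerates as the dimer shrinks, and the integral-mean representation above is precisely what exposes the first-order-in-$l$ cancellation of the numerator differences that keeps the quotient uniformly bounded for all $l\in(0,l_0]$. Everything else is bookkeeping of the Lipschitz constant and the sup-bounds of $F,H$ on $B_{r'}$, of the operator norms of the projector matrices on $\mathcal B_r$, and a single use of Cauchy--Schwarz to pass from $\sum_j\|v_j-\bar v_j\|$ to $\|x,v_1,\cdots,v_k\|$.
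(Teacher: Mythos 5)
Your proof of the first inequality is the same decomposition as the paper's (projector times $F$-difference plus projector-difference times $F$), with only cosmetic differences in how the constants are packaged, so there is nothing to flag there.

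The second inequality is where you genuinely diverge, and your route is cleaner. The paper estimates $\hat H(x,v_i,l)-\hat H(\bar x,\bar v_i,l)$ by first replacing each $\hat H$ by $H(\cdot)v_i$ via the second-order consistency $\hat H(x,v_i,l)=H(x)v_i+O(l^2)$ (citing Gould--Ortner--Packwood), then applying the Lipschitz bound to $H(x)v_i-H(\bar x)\bar v_i$; the two $O(l^2)$ remainders are what produce the additive $Q_2 l_0^2$ term in the statement. You instead write each of the four $F$-evaluations exactly through the integral mean-value identity $F(p)-F(\bar p)=\big(\int_0^1 H(\bar p+t(p-\bar p))\,dt\big)(p-\bar p)$, which is available because $\nabla F=H$, and observe that the resulting representation $\hat H(x,v_i,l)-\hat H(\bar x,\bar v_i,l)=\tfrac{1}{2l}(M_a-M_b)(x-\bar x)+\tfrac12(M_a+M_b)(v_i-\bar v_i)$ has the $l^{-1}$ compensated exactly because the two averaging segments are $O(l)$-close, giving $\|M_a-M_b\|\le 2L_{r'}lr$. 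This yields a genuine Lipschitz bound on $\hat H$ with no $l_0^2$ term at all, which a fortiori implies the stated inequality. Two advantages of your route are worth noting: it needs only that $F\in C^1$ with $H=\nabla F$ Lipschitz, whereas the second-order consistency the paper invokes really requires an extra degree of differentiability of $H$ to get $O(l^2)$ rather than $O(l)$; and it makes explicit that all arguments of $F,H$ live in the enlarged ball $B_{(1+l_0)r}$, a point the paper glosses over when it writes the constants as depending on $L_r$. The trade-off is that the paper's approach reuses the consistency estimate (\ref{H0}) that it needs anyway for the later truncation-error analysis, keeping the two estimates parallel, while yours introduces a separate argument for this lemma alone.
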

\begin{remark}
We write $Q_0$ and $Q_2$ as $Q_0(r)$ and $Q_2(r)$ in order to highlight their dependence on $r$. We neglect their dependence on $k$, $l_0$, $F$ and $H$ in the notations as these are fixed data throughout the paper. 
\end{remark}
\begin{proof}
 Direct calculations show that for $(x,v_1,\cdots,v_k),(\bar x,\bar v_1,\cdots,\bar v_k)\in\mathcal B_r$
\begin{equation}\label{XX}
\begin{array}{l}
\ds \|X(x,v_1,\cdots,v_k)-X(\bar x,\bar v_1,\cdots,\bar v_k)\|\\[0.05in]
\ds\quad\leq \bigg\|\bigg(I -2\sum_{j=1}^k v_jv_j^\top \bigg)(F(x)-F(\bar x))\bigg\|\\[0.15in]
\ds
\qquad+2\bigg\|\bigg(\sum_{j=1}^k\bar v_j\bar v_j^\top-\sum_{j=1}^k v_jv_j^\top \bigg)F(\bar x)\bigg\|\\[0.2in]
\ds\quad\leq (1+2kr^2)L_r\|x-\bar x\|\\[0.05in]
\ds\qquad+2\bigg\|\sum_{j=1}^k \bar v_j(\bar v_j^\top-v_j^\top)+(\bar v_j-v_j)v_j^\top\bigg\|(\|F(0)\|+L_r\|\bar x\|)\\[0.2in]
\ds\quad\leq Q_0(r)\big\|x-\bar x,v_1-\bar v_1,\cdots,v_k-\bar v_k\big\|
\end{array}
\end{equation}
 where 
 $$Q_0(r):=\sqrt{k+1}\max\big\{(1+2kr^2)L_r,4r(\|F(0)\|+L_rr) \big\}. $$
 
 To estimate $V_i(x,v_1,\cdots,v_k,l)-V_i(\bar x,\bar v_1,\cdots,\bar v_k,l)$, we follow \cite[Equation 4]{Gou} to obtain
\begin{equation}\label{H0}
\begin{array}{l}
\ds \hat H(x,v_i,l)=\frac{F(x+lv_i)-F(x-lv_i)}{2l}= H(x)v_{i}+O(l^2),~~O(l^2)\leq Q_1(r)l^2,
\end{array}
\end{equation}
 and consequently,
 $$\big\|\hat H(x,v_i,l)-\hat H(\bar x,\bar v_i,l)\big\|\leq \big(\|H(0)\|+rL_r\big)\big(\|v_i-\bar v_i\|+\|x-\bar x\|\big)+2Q_1(r)l_0^2. $$
We apply this relation and a similar derivation as (\ref{XX}) to obtain the second equation of (\ref{thme1}). Thus we complete the proof.
\end{proof}

 \subsection{Estimate of $(x,v_1,\cdots,v_k)$}
Let $\varepsilon_0>0$ be a fixed constant. By Assumption $\mathcal A$, for a fixed $r_0$ satisfying
\begin{equation}\label{r}
 r_0\geq \|x_0,v_{1,0},\cdots,v_{k,0}\|+\varepsilon_0
 \end{equation}
 there exists a constant $L_{r_0}>0$ such that (\ref{Lip}) is satisfied. We then define $\tilde X(x,v_1,\cdots,v_k)$ and $\tilde V_i(x,v_1,\cdots,v_k,l)$ for $1\leq i\leq k$ such that 
 \begin{itemize}
 \item[(i)] $\tilde X=X,~~\tilde V_i=V_i,~~1\leq i\leq k,~~(x,v_1,\cdots,v_k)\in\mathcal B_{r_0}$;
 
 \item[(ii)] $\tilde X$ and $\tilde V_i$ for $1\leq i\leq k $ satisfy the conditions (\ref{thme1}) globally (i.e., for any choice of $(x,v_1,\cdots,v_k)$ and $(\bar x,\bar v_1,\cdots,\bar v_k)$) with respect to the fixed constants $Q_0(r_0)$ and $Q_2(r_0)$.
 \end{itemize}
 \begin{remark}
 A possible choice of $\tilde X$ is
 $$\tilde X(x,v_1,\cdots,v_k)=\left\{\begin{array}{l}
 X(x,v_1,\cdots,v_k),\qquad\qquad\qquad\quad(x,v_1,\cdots,v_k)\in \mathcal B_{r_0};\\[0.1in]
 \ds X(\lambda x,\lambda v_1,\cdots,\lambda v_k),~~\lambda:=\frac{r}{\|x,v_1,\cdots,v_k\|},\text{  otherwise.}
\end{array} \right. $$
$\tilde V_1,\cdots,\tilde V_k$ could be similarly defined.
 \end{remark}
  Consider the following modified SSD on $[0,T]$ with $X$ and $\{V_i\}$ in (\ref{sadk}) replaced by $\tilde X$ and $\{\tilde V_i\}$, respectively
\begin{equation}\label{sadkaux}
\left\{
\begin{array}{l}
\ds \frac{d\tilde x}{dt} =\beta\tilde X,\\[0.1in]
\ds \frac{d\tilde v_i}{dt}=\gamma \tilde V_i,~~1\leq i\leq k,\\[0.1in]
\ds \frac{dl}{dt}=-l,
\end{array}
\right.
\end{equation}
equipped with the initial conditions (\ref{inicondk}). 
 We multiply $\tilde x$ on both sides of the first equation of (\ref{sadkaux}) and integrate the resulting equation from $0$ to $t$ to get
$$\begin{array}{rl}
\ds \|\tilde x(t)\|^2&\hspace{-0.1in}\ds\leq \|x_0\|^2+2\beta\int_0^t\tilde x^\top \tilde X ds\\
&\ds\hspace{-0.1in}\leq\|x_0\|^2+2\beta\int_0^t\|\tilde x(s)\|\big[\|\tilde X( x_0,v_{1,0},\cdots,v_{k,0})\|\\[0.15in]
&\ds\hspace{-0.1in}
\quad+Q_0(r_0)\big(\|\tilde x(s),\tilde v_1(s),\cdots,\tilde v_k(s)\|+\|x_0,v_{1,0},\cdots,v_{k,0}\|\big)\big]ds\\[0.1in]
&\hspace{-0.1in}\ds\leq \|x_0\|^2+(\beta+Q_0(r_0)\beta)\int_0^t\|\tilde x(s)\|^2ds+\beta T\big(\|X(x_0,v_{1,0},\cdots,v_{k,0})\|+Q_0(r_0)r_0\big)^2\\[0.05in]
&\hspace{-0.1in}\ds\quad+\beta Q_0(r_0)\int_0^t \|\tilde x(s),\tilde v_1(s),\cdots,\tilde v_k(s)\|^2 ds
\end{array} $$
where we used $(x_0,v_{1,0},\cdots,v_{k,0})\in\mathcal B_{r_0}$ in this derivation. 
 Similarly we bound $\tilde v_i$ in (\ref{sadkaux}) by
$$\begin{array}{rl}
\ds \|\tilde v_i(t)\|^2&\hspace{-0.1in}\ds\leq \|v_{i,0}\|^2+(\gamma+Q_2(r_0)\gamma)\int_0^t\|\tilde v_i(s)\|^2ds\\
&\ds\quad+\gamma T\big(\| V_i(x_0,v_{1,0},\cdots,v_{k,0})\|+Q_2(r_0)r_0+Q_2(r_0)l_0^2\big)^2\\[0.05in]
&\ds\quad+\gamma Q_2(r_0)\int_0^t \|\tilde x(s),\tilde v_1(s),\cdots,\tilde v_k(s)\|^2 ds.
\end{array} $$
Furthermore, we apply (\ref{H0}) to obtain
$$\begin{array}{l}
\ds \|X(x_0,v_{1,0},\cdots,v_{k,0})\|\leq \bigg(1+2k\sum_{j=1}^k\|v_{j,0}\|^2\bigg)\|F(x_0)\|=:Q_3,\\
\ds \|V_i(x_0,v_{1,0},\cdots,v_{k,0})\|\leq \bigg(1+\|v_{i,0}\|^2+ 2k\sum_{j=1}^k\|v_{j,0}\|^2\bigg)\\[0.15in]
\ds\hspace{1.7in}\times\big(\|H(x_0)\|\|v_{i,0}\|+Q_1(r_0)l_0^2\big)=:Q_4(r_0).
\end{array} $$
We incorporate the above equations to obtain
$$\begin{array}{l}
\ds \|\tilde x(t),\tilde v_1(t),\cdots,\tilde v_k(t)\|^2\leq \|x_0,v_{1,0},\cdots,v_{k,0}\|^2\\[0.05in]
\ds\qquad\qquad+Q_5(r_0)+Q_6(r_0)\int_0^t \|\tilde x(s),\tilde v_1(s),\cdots,\tilde v_k(s)\|^2 ds
\end{array}   $$
where 
$$\begin{array}{l}
\ds Q_5(r_0):=\beta T\big(Q_3+Q_0(r_0)r_0\big)^2+k\gamma T\big(Q_4(r_0)+Q_2(r_0)r_0+Q_2(r_0)l_0^2\big)^2,\\[0.05in]
\ds Q_6(r_0):=\max\{\beta(1+Q_0(r_0)),\gamma(1+Q_2(r_0))\}+\beta Q_0(r_0)+k\gamma Q_2(r_0).
\end{array}  $$
 Then an application of the Gronwall's inequality yields
$$\begin{array}{l}
\ds \|\tilde x(t),\tilde v_1(t),\cdots,\tilde v_k(t)\|\leq \big(\|x_0,v_{1,0},\cdots,v_{k,0}\|^2+Q_5(r_0)\big)^{1/2}e^{Q_6(r_0)T/2}=:S(\beta,\gamma)
\end{array}   $$
for $0\leq t\leq T$. As $S(\beta,\gamma)$ is an increasing function with respect to both $\beta$ and $\gamma$ ($\beta$, $\gamma\geq 0$) and attains its minimum $\|x_0,v_{1,0},\cdots,v_{k,0}\|$ at $\beta=\gamma=0$, we could select $\beta$ and $\gamma$ such that
 \begin{equation}\label{beta}
S(\beta,\gamma)\leq \|x_0,v_{1,0},\cdots,v_{k,0}\|+\varepsilon_0,
 \end{equation}
 which implies
 \begin{equation}\label{xbnd}
 \|\tilde x(t),\tilde v_1(t),\cdots,\tilde v_k(t)\|\leq \|x_0,v_{1,0},\cdots,v_{k,0}\|+\varepsilon_0,~~0\leq t\leq T.
 \end{equation}
 Recall that $r_0\geq \|x_0,v_{1,0},\cdots,v_{k,0}\|+\varepsilon_0$, we base on (i) to conclude that the modified SSD (\ref{sadkaux}) is indeed equivalent to the original SSD (\ref{sadk}) for $(\beta,\gamma)$ satisfying (\ref{beta}). We summarize the findings in the following theorem.
 \begin{theorem}\label{thmxbnd}
Suppose the Assumption $\mathcal A$ holds and $(\beta,\gamma)$ satisfy (\ref{beta}), then $(x,v_1,\cdots,v_k)$ in the SSD (\ref{sadk}) are bounded as (\ref{xbnd}) and thus we could always apply the Lipschitz conditions of $F(x)$ and $H(x)$ in subsequent proofs with a fixed Lipschitz constant $L_{r_0}$ corresponding to the $r_0$ given by (\ref{r}).
 \end{theorem}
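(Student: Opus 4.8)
The plan is to break the apparent circularity --- ``the solution stays in a ball $\mathcal B_{r_0}$'' versus ``the right-hand side is Lipschitz only on $\mathcal B_{r_0}$'' --- by passing to the globally modified system \eqref{sadkaux}. First I would fix $r_0$ as in \eqref{r}, extract the Lipschitz constant $L_{r_0}$ from Assumption $\mathcal A$, and construct the truncated nonlinearities $\tilde X,\tilde V_1,\dots,\tilde V_k$ satisfying properties (i)--(ii) (for instance by the radial rescaling given in the Remark). Since $\tilde X$ and $\{\tilde V_i\}$ are then globally Lipschitz in $(x,v_1,\dots,v_k)$ with the fixed constants $Q_0(r_0),Q_2(r_0)$ coming from the Lipschitz-type bounds \eqref{thme1}, and since the $l$-equation is linear, the modified SSD \eqref{sadkaux} with initial data \eqref{inicondk} admits a unique solution on the whole interval $[0,T]$ by the standard existence--uniqueness theory for ODEs with globally Lipschitz right-hand side.

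Next I would carry out the energy estimate exactly as performed just above the theorem statement: test the $\tilde x$-equation with $\tilde x$ and integrate, using \eqref{thme1} centered at the initial point $(x_0,v_{1,0},\dots,v_{k,0})\in\mathcal B_{r_0}$ together with the explicit bounds $\|X(x_0,\dots)\|\le Q_3$ and $\|V_i(x_0,\dots)\|\le Q_4(r_0)$ obtained from \eqref{H0}; do the same for each $\tilde v_i$; then add the inequalities to arrive at an integral inequality of the form $\|\tilde x(t),\tilde v_1(t),\dots,\tilde v_k(t)\|^2\le\|x_0,v_{1,0},\dots,v_{k,0}\|^2+Q_5(r_0)+Q_6(r_0)\int_0^t\|\tilde x(s),\tilde v_1(s),\dots,\tilde v_k(s)\|^2\,ds$. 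Gronwall's inequality then yields the uniform bound $\|\tilde x(t),\tilde v_1(t),\dots,\tilde v_k(t)\|\le S(\beta,\gamma)$ on $[0,T]$, where $S(\beta,\gamma)$ depends only on the fixed data $x_0,v_{i,0},l_0,T,k,L_{r_0}$ and on $F,H$ --- and, crucially, not on the (a priori unknown) solution itself.

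The observation that closes the argument is that $S(\beta,\gamma)$ is continuous and nondecreasing in $\beta,\gamma\ge0$ with $S(0,0)=\|x_0,v_{1,0},\dots,v_{k,0}\|$; hence $\beta,\gamma$ can be chosen small enough that \eqref{beta} holds, which forces $\|\tilde x(t),\tilde v_1(t),\dots,\tilde v_k(t)\|\le\|x_0,v_{1,0},\dots,v_{k,0}\|+\varepsilon_0\le r_0$ for every $t\in[0,T]$. Thus the trajectory of the modified system never leaves $\mathcal B_{r_0}$, on which, by (i), $\tilde X=X$ and $\tilde V_i=V_i$; therefore $(\tilde x,\tilde v_1,\dots,\tilde v_k,l)$ also solves the original SSD \eqref{sadk}, and by uniqueness it is \emph{the} solution of \eqref{sadk}. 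Consequently \eqref{xbnd} holds for the original solution, and since this solution remains in $\mathcal B_{r_0}$ we may invoke the Lipschitz continuity of $F$ and $H$ with the single constant $L_{r_0}$ in all subsequent estimates.

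As for the main difficulty: no individual step is technically demanding, but the conceptual crux --- and the place requiring care --- is exactly this truncate-then-bootstrap device. One must verify that the Gronwall bound $S(\beta,\gamma)$ genuinely depends only on fixed data (so that shrinking $\beta,\gamma$ is a legitimate move rather than a covert use of the bound one is trying to prove), and one must confirm that the modified and original systems really do coincide along the constructed trajectory --- this is where property (i) and the inequality $r_0\ge\|x_0,v_{1,0},\dots,v_{k,0}\|+\varepsilon_0$ enter. A secondary point worth recording explicitly is that the same construction simultaneously delivers global-in-$[0,T]$ existence and uniqueness for \eqref{sadk} under the parameter restriction \eqref{beta}.
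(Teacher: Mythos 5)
Your proposal is correct and mirrors the paper's own argument step for step: truncate the nonlinearities to enforce global Lipschitz bounds, run the Gronwall energy estimate on the modified system \eqref{sadkaux} to obtain the parameter-dependent bound $S(\beta,\gamma)$, shrink $(\beta,\gamma)$ so that \eqref{beta} holds, and then use property (i) together with $r_0\ge\|x_0,v_{1,0},\dots,v_{k,0}\|+\varepsilon_0$ to identify the modified solution with the original one. The only addition is your explicit remark that the same construction also yields global-in-$[0,T]$ existence and uniqueness, which the paper leaves implicit.
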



\section{Discrete SSD and auxiliary results}\label{secnumer}
In this section we present the numerical scheme to (\ref{sadk}) and prove auxiliary lemmas to be used in the error estimates.
\subsection{Numerical scheme}
 Let $0=t_0<t_1<\cdots<t_K=T$ be a uniform temporal partition of $[0,T]$ with the time step size $\tau:=T/K$ for some $0<K\in\mathbb N$. We approximate the first-order derivative by the Euler scheme at $t_n$ as follows
$$\frac{dg(t_n)}{dt}=\frac{g(t_n)-g(t_{n-1})}{\tau}+R_n^g $$
where $g$ refers to $x$ or $v_i$, and we suppose the truncation error satisfies $\|R_n^g\|=O(\tau)$. Invoking this discretization in (\ref{sadk}) yields the following reference equations for the dynamics (\ref{sadk})
\begin{equation}\label{Refk}
\hspace{-0.05in}\left\{
\begin{array}{l}
\ds\!\!\! x(t_{n}) =x(t_{n-1})+\tau\beta\bigg(I -2\sum_{j=1}^k v_j(t_{n-1})v_j^\top(t_{n-1}) \bigg)F(x(t_{n-1}))+\tau R_n^x,\\[0.075in]
\ds\!\!\! v_i(t_{n})=v_i(t_{n-1})+\tau\gamma\bigg( I-v_i(t_{n-1})v_i^\top(t_{n-1})\\
\ds\hspace{1.5in}-2\sum_{j=1}^{i-1}v_j(t_{n-1})v_j^\top(t_{n-1})\bigg)\\[0.175in]
\ds\hspace{1.1in}\times \hat H(x(t_{n-1}),v_i(t_{n-1}),l(t_{n-1}))+\tau R_{n}^{v_i},~~1\leq i\leq k,\\[0.075in]
\ds\!\!\! l(t_n)=e^{-t_n}l_0
\end{array}
\right.
\end{equation}
where we analytically solved the equation of $l$ without approximation. In the rest of the paper we denote 
$$l_n=l(t_n)=e^{-t_n}l_0$$
 for simplicity and we then drop the truncation errors in the reference equations to obtain the explicit scheme of (\ref{sadk}) 
\begin{equation}\label{FDsadk}
\left\{
\begin{array}{l}
\ds x_{n} =x_{n-1}+\tau\beta\bigg(I -2\sum_{j=1}^k v_{j,n-1}v_{j,n-1}^\top \bigg)F(x_{n-1}),\\[0.1in]
\ds \tilde v_{i,n}=v_{i,n-1}+\tau\gamma\bigg( I-v_{i,n-1}v_{i,n-1}^\top\\[0.1in]
\ds\hspace{0.7in}-2\sum_{j=1}^{i-1}v_{j,n-1}v_{j,n-1}^\top\bigg)\hat H(x_{n-1},v_{i,n-1},l_{n-1}),~~1\leq i\leq k,\\[0.2in]
\ds \{v_{i,n}\}_{i=1}^k=\text{GS}(\{\tilde v_{i,n}\}_{i=1}^k)
\end{array}
\right.
\end{equation}
for $1\leq n\leq K$, equipped with the initial conditions (\ref{inicondk}). Here the notation GS$(\cdot)$ refers to the Gram-Schmidt orthonormalization procedure, the purpose of which is to preserve the orthonormal property of the vectors \cite{YinSISC,YinSCM}. Due to the orthonormalization, $\|v_{i,n}\|=1$ for all possible $i$ and $n$, and by a discrete analogue of the derivations in Section \ref{secxbnd}, we could obtain the estimate of $x_n$. To be specific, let $r_0\geq \|x_0\|+\varepsilon_0$ be a fixed constant for some $\varepsilon_0>0$. Then we consider the following auxiliary problem 
\begin{equation}\label{xtilde}
 \tilde x_{n} =\tilde x_{n-1}+\tau\beta\bigg(I -2\sum_{j=1}^k v_{j,n-1}v_{j,n-1}^\top \bigg)\tilde F(\tilde x_{n-1}) \end{equation}
for $1\leq n\leq K$ with $\tilde x_0=x_0$. Here $\tilde F(\cdot)$ is defined as
$$\tilde F(x)=\left\{\begin{array}{ll}
 F(x),&\ds \|x\|\leq r_0;\\[0.1in]
 \ds F\bigg(\frac{r_0}{\|x\|} x\bigg),&\ds\text{otherwise}
\end{array} \right. $$
such that $\tilde F$ satisfies the global Lipschitz condition with the Lipschitz constant $L_{r_0}$. Then we apply the norm-preserving property of the Householder matrix
$$I -2\sum_{j=1}^k v_{j,n-1}v_{j,n-1}^\top  $$
on (\ref{xtilde}) to obtain
\begin{equation*}
\|\tilde x_n\|\leq \|\tilde x_{n-1}\|+\tau\beta\|\tilde F(\tilde x_{n-1})\|.
\end{equation*}
We then apply the global Lipschitz condition of $\tilde F$ to get
\begin{equation*}
\|\tilde x_n\|\leq \|\tilde x_{n-1}\|+\tau\beta\big(\|\tilde F(0)\|+L_{r_0}\|\tilde x_{n-1}\|\big)= \|\tilde x_{n-1}\|+\tau\beta\big(\|F(0)\|+L_{r_0}\|\tilde x_{n-1}\|\big).
\end{equation*}
Adding this equation for $1\leq n\leq m$ for some $m\leq K$ yields
\begin{equation*}
\|\tilde x_m\|\leq \|x_{0}\|+\beta T\|F(0)\|+\tau\beta L_{r_0}\sum_{n=1}^m\|\tilde x_{n-1}\|.
\end{equation*}
Then an application of the discrete Gronwall inequality leads to
$$\|\tilde x_m\|\leq \big(\|x_{0}\|+\beta T\|F(0)\|\big)e^{\beta L_{r_0}T}. $$
Consequently, if $\beta$ satisfies 
\begin{equation}\label{beta2}
\big(\|x_0\|+\beta T\|F(0)\|\big)e^{\beta L_{r_0} T}\leq \|x_0\|+\varepsilon_0,
\end{equation}
$\tilde x_n$ is bounded as
\begin{equation}\label{xnnorm}
 \|\tilde x_n\|\leq \|x_0\|+\varepsilon_0,~~0\leq n\leq K, 
 \end{equation}
and thus the equation (\ref{xtilde}) is equivalent to the first equation of (\ref{FDsadk}). This implies that $x_n$ is bounded as (\ref{xnnorm}) and we could always apply the Lipschitz conditions of $F(x)$ and $H(x)$ in subsequent proofs with a fixed Lipschitz constant $L_{r_0}$. 

In the rest of the paper we use $Q$ to denote a generic positive constant that may assume different values at different occurrences.
\subsection{Auxiliary estimates}
We prove several auxiliary estimates to support the error estimates. 
 \begin{lemma}\label{lem1k}
 Suppose  (\ref{beta}), (\ref{beta2}) and the Assumption $\mathcal A$ hold and $l_0^2=O(\tau)$, then the following estimates hold for $1\leq n\leq K$ 
 $$\begin{array}{c}
 \ds \big|\tilde v_{m,n}^\top \tilde v_{i,n}\big|\leq Q\tau^2,~~1\leq m<i\leq k.
\end{array}  $$
Here the positive constant $Q$ is independent from $n$, $K$ and $\tau$. 
 \end{lemma}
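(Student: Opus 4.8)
The plan is to exploit the near-orthonormality produced by the Gram--Schmidt step at time level $n-1$ together with the symmetry of the Hessian. Write the second line of \eqref{FDsadk} as $\tilde v_{i,n}=v_{i,n-1}+\tau\gamma P_i h_i$, where $h_i:=\hat H(x_{n-1},v_{i,n-1},l_{n-1})$ and $P_i:=I-v_{i,n-1}v_{i,n-1}^\top-2\sum_{j=1}^{i-1}v_{j,n-1}v_{j,n-1}^\top$, which is symmetric. Since $\{v_{j,n-1}\}_{j=1}^k$ is orthonormal (it is the output of GS at the previous step, or the prescribed initial data \eqref{inicondk} when $n=1$), expanding the inner product for $m<i$ gives
\begin{equation*}
\tilde v_{m,n}^\top\tilde v_{i,n}=v_{m,n-1}^\top v_{i,n-1}+\tau\gamma\big(v_{m,n-1}^\top P_i h_i+h_m^\top P_m v_{i,n-1}\big)+\tau^2\gamma^2\, h_m^\top P_m P_i h_i.
\end{equation*}
The first term vanishes since $m\ne i$. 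The $\tau^2$ term is $O(\tau^2)$: by the dimer expansion \eqref{H0}, the bound $\|x_{n-1}\|\le\|x_0\|+\varepsilon_0$ from \eqref{xnnorm}, $\|v_{i,n-1}\|=1$, and $l_{n-1}=e^{-t_{n-1}}l_0\le l_0$ with $l_0^2=O(\tau)$, the vectors $h_i,h_m$ and the matrices $P_i,P_m$ are bounded by constants independent of $n$, $K$ and $\tau$.

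It remains to handle the two $O(\tau)$ cross terms, and here the projection structure is decisive. For $m<i$ the orthonormality of $\{v_{j,n-1}\}$ gives $v_{m,n-1}^\top v_{i,n-1}=0$ and $v_{m,n-1}^\top v_{j,n-1}=\delta_{mj}$ for $1\le j\le i-1$, so that $v_{m,n-1}^\top P_i=v_{m,n-1}^\top-2v_{m,n-1}^\top=-v_{m,n-1}^\top$; likewise $v_{i,n-1}^\top v_{j,n-1}=0$ for $1\le j\le m$ (indeed for all $j<i$), so $P_m v_{i,n-1}=v_{i,n-1}$. Hence the $O(\tau)$ contribution collapses to $\tau\gamma\big(h_m^\top v_{i,n-1}-h_i^\top v_{m,n-1}\big)$.

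The crux is to show that this bracket is $O(l_0^2)$, which by $l_0^2=O(\tau)$ turns the whole $O(\tau)$ term into $O(\tau^2)$. Using \eqref{H0}, write $h_m=H(x_{n-1})v_{m,n-1}+E_m$ and $h_i=H(x_{n-1})v_{i,n-1}+E_i$ with $\|E_m\|,\|E_i\|\le Q_1(r_0)l_{n-1}^2\le Q_1(r_0)l_0^2$. Because $H(x_{n-1})=-\nabla^2E(x_{n-1})$ is symmetric, $v_{m,n-1}^\top H(x_{n-1})v_{i,n-1}=v_{i,n-1}^\top H(x_{n-1})v_{m,n-1}$, so the principal parts of $h_m^\top v_{i,n-1}$ and $h_i^\top v_{m,n-1}$ cancel and $h_m^\top v_{i,n-1}-h_i^\top v_{m,n-1}=E_m^\top v_{i,n-1}-E_i^\top v_{m,n-1}$, whose norm is at most $2Q_1(r_0)l_0^2$. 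Collecting the three contributions yields $|\tilde v_{m,n}^\top\tilde v_{i,n}|\le Q\tau^2$ with $Q$ independent of $n$, $K$ and $\tau$.

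The main obstacle is precisely this last cancellation. With the exact Hessian $H(x_{n-1})$ the $O(\tau)$ term vanishes identically by symmetry; the dimer approximation leaves a residual of size $O(l_0^2)$, and it is exactly this residual that forces the matching condition $l_0^2=O(\tau)$ and reappears throughout the subsequent error analysis. One must also keep careful track of which inner products among $\{v_{j,n-1}\}$ vanish — the hypothesis $m<i$ is used in an essential way in both $v_{m,n-1}^\top P_i=-v_{m,n-1}^\top$ and $P_m v_{i,n-1}=v_{i,n-1}$ — and verify that the constants produced by \eqref{H0} and \eqref{xnnorm} are uniform in $n$.
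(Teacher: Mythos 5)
Your proof is correct and follows essentially the same route as the paper: both rest on the cancellation $v_{i,n-1}^\top \hat H(x_{n-1},v_{m,n-1},l_{n-1})-v_{m,n-1}^\top \hat H(x_{n-1},v_{i,n-1},l_{n-1})=O(l_{n-1}^2)$ coming from the symmetry of $H$ via \eqref{H0}, together with boundedness of $\hat H$ and the matching $l_0^2=O(\tau)$. You simply make explicit the algebra (the identities $v_{m,n-1}^\top P_i=-v_{m,n-1}^\top$ and $P_m v_{i,n-1}=v_{i,n-1}$ from the orthonormality of $\{v_{j,n-1}\}$) that the paper compresses into the unproved display \eqref{mhback}.
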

 \begin{remark}
 Note that the initial value $l_0$ of the dimer length $l$ is chosen in the magnitude of $\sqrt{\tau}$, which is key in preserving the first-order accuracy of the scheme (\ref{FDsadk}) as we will see later. The inherent reason is that the numerical method (\ref{FDsadk}) is close to a fully-discrete numerical scheme of some space-time PDE model with the Hessian and its dimer approximation serving as a ``spatial operator'' and its discretization, respectively, and the  dimer length serves like a ``spatial mesh size'', which should match the time-stepping size $\tau$ to keep the $O(\tau)$ accuracy of the numerical method.   
 \end{remark}
\begin{proof} 
For $1\leq m< i\leq k$ we apply (\ref{H0}) and the symmetry of $H(x)$ to obtain
\begin{equation}\label{Hest}
\ds\big| v^\top_{i,n-1}\hat H(x_{n-1},v_{m,n-1},l_{n-1})-v_{m,n-1}^\top \hat H(x_{n-1},v_{i,n-1},l_{n-1})\big|=\big|O(l^2_{n-1})\big|\leq Q\tau.
 \end{equation}
By the Assumption $\mathcal A$ we conclude that $H(x_n)$ ($0\leq n\leq K$) is bounded, which, together with (\ref{H0}), implies that 
\begin{equation}\label{bndh}
\hat H(x_n,v_{j,n},l_n) \text{ for }0\leq n\leq K\text{ and } 1\leq j\leq k\text{ are  bounded}.
\end{equation}
 We invoke these boundedness and (\ref{Hest}) to the right-hand side of the following equation
\begin{equation}\label{mhback}
\ds \tilde v_{m,n}^\top \tilde v_{i,n}=\tau\gamma\big(v^\top_{i,n-1}\hat H(x_{n-1},v_{m,n-1},l_{n-1})-v_{m,n-1}^\top \hat H(x_{n-1},v_{i,n-1},l_{n-1})\big)+O(\tau^2)
\end{equation}
to complete the proof.
\end{proof}
\begin{lemma}\label{lem1kk}
Under (\ref{beta}), (\ref{beta2}) and the Assumption $\mathcal A$, the following estimates hold for $1\leq n\leq K$ 
 $$ \big|\|\tilde v_{i,n}\|^2-1\big|\leq Q\tau^2,~~1\leq i\leq k.
  $$
Here the positive constant $Q$ is independent from $n$, $K$ and $\tau$. 
 \end{lemma}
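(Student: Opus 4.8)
The plan is to expand $\|\tilde v_{i,n}\|^2$ directly from the defining relation for $\tilde v_{i,n}$ in (\ref{FDsadk}) and to exploit the orthonormality of the previous iterate $\{v_{j,n-1}\}_{j=1}^k$ to eliminate the linear-in-$\tau$ term exactly. Fix $1\le i\le k$ and $1\le n\le K$, write $w_i:=\hat H(x_{n-1},v_{i,n-1},l_{n-1})$ and
\[
P_i:=I-v_{i,n-1}v_{i,n-1}^\top-2\sum_{j=1}^{i-1}v_{j,n-1}v_{j,n-1}^\top ,
\]
so that $\tilde v_{i,n}=v_{i,n-1}+\tau\gamma P_iw_i$ and hence
\[
\|\tilde v_{i,n}\|^2=\|v_{i,n-1}\|^2+2\tau\gamma\, v_{i,n-1}^\top P_iw_i+\tau^2\gamma^2\|P_iw_i\|^2 .
\]
Since the Gram--Schmidt step in (\ref{FDsadk}) produces an orthonormal system and the initial directions satisfy $v_{i,0}^\top v_{j,0}=\delta_{i,j}$, the vectors $\{v_{j,n-1}\}_{j=1}^k$ are orthonormal; in particular $\|v_{i,n-1}\|=1$, so the first term equals $1$.

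The crux is that the cross term vanishes \emph{identically}: using $v_{i,n-1}^\top v_{i,n-1}=1$ and $v_{i,n-1}^\top v_{j,n-1}=0$ for $j<i$,
\[
v_{i,n-1}^\top P_i=v_{i,n-1}^\top-(v_{i,n-1}^\top v_{i,n-1})\,v_{i,n-1}^\top-2\sum_{j=1}^{i-1}(v_{i,n-1}^\top v_{j,n-1})\,v_{j,n-1}^\top=0 .
\]
Therefore $\|\tilde v_{i,n}\|^2=1+\tau^2\gamma^2\|P_iw_i\|^2$, and it only remains to bound the quadratic remainder uniformly in $n$, $K$, $\tau$.

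For the remainder I would use that $\|P_i\|\le 1+1+2(i-1)\le 2k$, because $\|v_{j,n-1}\|=1$ for every $j$, and that $\|w_i\|=\|\hat H(x_{n-1},v_{i,n-1},l_{n-1})\|$ is bounded by a constant independent of $n$, $K$, $\tau$; this is exactly (\ref{bndh}), which in turn rests on the boundedness of $x_n$ in (\ref{xnnorm}) (guaranteed by (\ref{beta2})), the Assumption $\mathcal A$, and the expansion (\ref{H0}). Consequently $\|P_iw_i\|^2\le Q$ and
\[
\big|\|\tilde v_{i,n}\|^2-1\big|=\tau^2\gamma^2\|P_iw_i\|^2\le Q\tau^2 ,
\]
as claimed. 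I do not expect a genuinely delicate obstacle here: unlike Lemma~\ref{lem1k}, this estimate does not need the matching $l_0^2=O(\tau)$, and the only point requiring care is the exact cancellation of the cross term, which hinges on the orthonormality of the previous iterate rather than on any approximate identity for $\hat H$.
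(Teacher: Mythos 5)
Your proof is correct and rests on the same key observation as the paper's: the orthonormality of $\{v_{j,n-1}\}_{j=1}^k$ forces the linear-in-$\tau$ cross term to vanish exactly (the paper phrases this as $v_{i,n-1}^\top\tilde v_{i,n}=1$ in (\ref{mh2k}), you phrase it as $v_{i,n-1}^\top P_i=0$), and the remaining $O(\tau^2)$ term is controlled by the boundedness (\ref{bndh}) of $\hat H$. Your version is slightly more streamlined --- you expand $\|\tilde v_{i,n}\|^2$ directly as a quadratic in $\tau$ to get the closed identity $\|\tilde v_{i,n}\|^2=1+\tau^2\gamma^2\|P_iw_i\|^2$, whereas the paper reaches the same conclusion in two steps via (\ref{mh2k})--(\ref{mh3k}) together with the intermediate bound (\ref{aux1}); the two routes are equivalent, and your remark that $l_0^2=O(\tau)$ is not needed here is consistent with the paper's statement of this lemma.
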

\begin{proof}
By the boundedness of $\hat H$ in (\ref{bndh}), we obtain from the second equation of (\ref{FDsadk}) that
\begin{equation}\label{aux1}
\begin{array}{l}
\ds \|\tilde v_{i,n}-v_{i,n-1}\|= \tau\gamma\bigg\|\bigg( I-v_{i,n-1}v_{i,n-1}^\top\\[0.15in]
\ds\hspace{1in}-2\sum_{j=1}^{i-1}v_{j,n-1}v_{j,n-1}^\top\bigg)\hat H(x_{n-1},v_{i,n-1},l_{n-1})\bigg\|\leq Q\tau.
\end{array}
\end{equation}
We multiply $v^\top_{i,n-1}$ on both sides of the second equation of (\ref{FDsadk}) and use the orthonormal property of $\{v_{i,n-1}\}_{i=1}^k$ to obtain for $1\leq i\leq k$
\begin{equation}\label{mh2k}
\begin{array}{l}
\ds v^\top_{i,n-1}\tilde v_{i,n}=v^\top_{i,n-1}v_{i,n-1}+\tau\gamma\bigg( v^\top_{i,n-1}-v^\top_{i,n-1}v_{i,n-1}v_{i,n-1}^\top\\
\ds\qquad\qquad\qquad-2\sum_{j=1}^{i-1}v^\top_{i,n-1}v_{j,n-1}v_{j,n-1}^\top\bigg)\hat H(x_{n-1},v_{i,n-1},l_{n-1})=1.
\end{array}
\end{equation}
We then multiply $\tilde v_{i,n}^\top$ on both sides of the second equation of (\ref{FDsadk}) and apply (\ref{mh2k}) and the orthogonality of $\{v_{i,n-1}\}_{i=1}^k$ to obtain
\begin{equation}\label{mh3k}
\begin{array}{l}
\ds \tilde v_{i,n}^\top\tilde v_{i,n}=\tilde v_{i,n}^\top v_{i,n-1}+\tau\gamma\bigg( \tilde v_{i,n}^\top-\tilde v_{i,n}^\top v_{i,n-1}v_{i,n-1}^\top-2\sum_{j=1}^{i-1}\tilde v_{i,n}^\top v_{j,n-1}v_{j,n-1}^\top\bigg)\hat H(x_{n-1},v_{i,n-1},l_{n-1})\\
\ds\hspace{0.48in} =1+\tau\gamma\bigg( \tilde v_{i,n}^\top-v_{i,n-1}^\top-2\sum_{j=1}^{i-1}(\tilde v_{i,n}-v_{i,n-1})^\top v_{j,n-1} v_{j,n-1}^\top\bigg)\hat H(x_{n-1},v_{i,n-1},l_{n-1}).
\end{array}
\end{equation}
Invoking (\ref{aux1}) in (\ref{mh3k}) leads to
$$\begin{array}{l}
\ds \big|\|\tilde v_{i,n}\|^2-1\big|\leq \tau\gamma\bigg( \|\tilde v_{i,n}-v_{i,n-1}\|+2\sum_{j=1}^{i-1}\|\tilde v_{i,n}-v_{i,n-1}\|\bigg)\|\hat H(x_{n-1},v_{i,n-1},l_{n-1})\|\\[0.2in]
\ds\qquad~\quad\qquad\leq Q\tau^2,~~1\leq i\leq k,~~1\leq n\leq K,
\end{array} $$
which completes the proof.
\end{proof}

\begin{lemma}\label{lem2k}
Suppose (\ref{beta}), (\ref{beta2}) and the Assumption $\mathcal A$ hold and $l_0^2=O(\tau)$, the following estimate holds for $1\leq n\leq K$ and $\tau$ sufficiently small 
$$\|v_{i,n}-\tilde v_{i,n}\|\leq Q\tau^2,~~1\leq i\leq k.$$
Here the positive constant $Q$ is independent from $n$, $K$ and $\tau$. 
\end{lemma}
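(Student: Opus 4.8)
The plan is to exploit the fact that, by Lemmas \ref{lem1k} and \ref{lem1kk}, the family $\{\tilde v_{i,n}\}_{i=1}^k$ is already \emph{almost} orthonormal up to $O(\tau^2)$, so that the orthonormalization GS$(\cdot)$ can only perturb each vector by $O(\tau^2)$. I would fix $n$ and suppress it from the notation, and use the classical Gram--Schmidt recursion $w_1=\tilde v_1$, $v_1=w_1/\|w_1\|$, and for $2\le i\le k$,
\[
w_i=\tilde v_i-\sum_{j=1}^{i-1}(\tilde v_i^\top v_j)\,v_j,\qquad v_i=\frac{w_i}{\|w_i\|},
\]
then prove $\|v_i-\tilde v_i\|\le Q\tau^2$ by induction on $i$. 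Throughout I would keep in mind two elementary facts: $\|v_j\|=1$ exactly, and, by Lemma \ref{lem1kk}, $\|\tilde v_j\|\le 1+Q\tau^2\le 2$ for $\tau$ small; the latter is also what allows dividing by $\|\tilde v_j\|$ and $\|w_j\|$ safely, which is exactly why the hypothesis ``$\tau$ sufficiently small'' is needed.

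For the base case $i=1$ I would write $\|v_1-\tilde v_1\|=\big|1-\|\tilde v_1\|\big|=\big|1-\|\tilde v_1\|^2\big|/(1+\|\tilde v_1\|)$ and invoke Lemma \ref{lem1kk}. For the inductive step, assuming $\|v_j-\tilde v_j\|\le Q\tau^2$ for all $j<i$, the first move is to control the Gram--Schmidt coefficients: $|\tilde v_i^\top v_j|\le |\tilde v_i^\top\tilde v_j|+\|\tilde v_i\|\,\|v_j-\tilde v_j\|\le Q\tau^2$, using Lemma \ref{lem1k} for the first term and the inductive hypothesis for the second. This immediately gives $\|w_i-\tilde v_i\|\le\sum_{j<i}|\tilde v_i^\top v_j|\le Q\tau^2$, hence $\big|\|w_i\|-\|\tilde v_i\|\big|\le Q\tau^2$ and, combined with Lemma \ref{lem1kk}, $\big|\|w_i\|^2-1\big|\le Q\tau^2$, so $\big|\|w_i\|-1\big|\le Q\tau^2$. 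The conclusion then follows from $\|v_i-\tilde v_i\|\le\|v_i-w_i\|+\|w_i-\tilde v_i\|=\big|1-\|w_i\|\big|+\|w_i-\tilde v_i\|$, closing the induction. (The modified Gram--Schmidt variant would be handled identically, since every intermediate partial sum is again $O(\tau^2)$.)

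I do not expect a genuine obstacle here: the argument is essentially bookkeeping with the generic constant $Q$, which is permitted to grow over the at most $k$ induction steps since $k$ is fixed. The only point requiring a little care — and the recurring device above — is to push the $O(\tau^2)$ near-orthonormality through the nonlinear normalization $w\mapsto w/\|w\|$ without losing an order; this is achieved by always passing to squared norms and using $|a-1|\le|a^2-1|/(a+1)$ to convert an $O(\tau^2)$ bound on $\|\cdot\|^2-1$ into an $O(\tau^2)$ bound on $\|\cdot\|-1$.
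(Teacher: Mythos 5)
Your proof is correct and is, in substance, the standard Gram--Schmidt perturbation argument. The paper itself omits the proof of this lemma, deferring to \cite[Lemma 4.2]{Z3}; the argument there is the same inductive bookkeeping you carry out, with the only change being that the $O(\tau^2)$ near-orthonormality inputs now come from the dimer-approximate Lemmas~\ref{lem1k} and \ref{lem1kk} (which is precisely where the hypothesis $l_0^2=O(\tau)$ enters) rather than from exact-Hessian analogues. Your reduction $|a-1|\le |a^2-1|/(a+1)$ to push the squared-norm bound through the normalization, and your observation that the constant may grow over the finitely many ($\le k$) induction steps, are exactly the right devices; no gap.
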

\begin{proof}
The proof could be performed following that of \cite[Lemma 4.2]{Z3} and is thus omitted. \end{proof}

\section{Error estimate and accuracy improvment}\label{secconv}
In this section we prove error estimates for the numerical discretization (\ref{FDsadk}) to the SSD (\ref{sadk}). Based on the analyzed first-order scheme (\ref{FDsadk}), we then employ the Richardson extrapolation to obtain a second-order approximation.

\subsection{Error estimate of (\ref{FDsadk})}\label{sec41}
we analyze the scheme (\ref{FDsadk}) in the following theorem.
\begin{theorem}\label{thmevk}
Suppose (\ref{beta}), (\ref{beta2})  and the Assumption $\mathcal A$ hold and $l_0^2=O(\tau)$. Then the following estimate holds for $\tau$ sufficiently small 
$$\max_{1\leq n\leq K}\bigg(\|x(t_n)-x_n\|+\sum_{i=1}^k\|v_i(t_n)-v_{i,n}\|\bigg)\leq Q\tau. $$
Here $Q$ is independent from $\tau$, $n$ and $K$.
\end{theorem}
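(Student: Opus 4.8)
The plan is to compare the scheme (\ref{FDsadk}) directly with the reference equations (\ref{Refk}), which are precisely the Euler discretization of (\ref{sadk}) evaluated along the exact solution, and then to close a discrete Gronwall argument. Set $e_n^x:=x(t_n)-x_n$, $e_n^{v_i}:=v_i(t_n)-v_{i,n}$ and $E_n:=\|e_n^x\|+\sum_{i=1}^k\|e_n^{v_i}\|$; the initial conditions (\ref{inicondk}) give $E_0=0$. The first preparatory point is that all states involved stay in one fixed ball: by (\ref{xbnd}) the exact trajectory $(x(t),v_1(t),\cdots,v_k(t))$ lies in $\mathcal B_{r_1}$ with $r_1=\|x_0,v_{1,0},\cdots,v_{k,0}\|+\varepsilon_0$, while (\ref{xnnorm}) together with $\|v_{i,n}\|=1$ (a consequence of the Gram--Schmidt step) gives $(x_n,v_{1,n},\cdots,v_{k,n})\in\mathcal B_{r_2}$ with $r_2=\big((\|x_0\|+\varepsilon_0)^2+k\big)^{1/2}$. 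Taking $r:=\max\{r_1,r_2\}$, we may invoke the Lipschitz-type bounds (\ref{thme1}) for $X$ and $V_i$ with the single pair of constants $Q_0(r),Q_2(r)$, and likewise use the Lipschitz continuity of $F,H$ with the fixed constant $L_{r_0}$ given by Theorem \ref{thmxbnd}, in every step below.

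Next I would treat the two components. Subtracting the first line of (\ref{FDsadk}) from the first line of (\ref{Refk}) and recognizing the bracketed difference as $X(x(t_{n-1}),\cdots)-X(x_{n-1},\cdots)$, the first estimate of (\ref{thme1}) and $\|R_n^x\|=O(\tau)$ give
$$\|e_n^x\|\le\|e_{n-1}^x\|+\tau\beta Q_0(r)\big\|e_{n-1}^x,e_{n-1}^{v_1},\cdots,e_{n-1}^{v_k}\big\|+\tau\|R_n^x\|\le\|e_{n-1}^x\|+\tau\beta Q_0(r)E_{n-1}+Q\tau^2.$$
For the directions, first compare $v_i(t_n)$ with the pre-orthonormalization iterate $\tilde v_{i,n}$: since both (\ref{Refk}) and (\ref{FDsadk}) use $\hat H$ with the \emph{same} dimer length $l_{n-1}=e^{-t_{n-1}}l_0$, the bracketed difference equals $V_i(x(t_{n-1}),\cdots,l_{n-1})-V_i(x_{n-1},\cdots,l_{n-1})$, so the second estimate of (\ref{thme1}) yields
$$\|v_i(t_n)-\tilde v_{i,n}\|\le\|e_{n-1}^{v_i}\|+\tau\gamma Q_2(r)\big(E_{n-1}+l_0^2\big)+\tau\|R_n^{v_i}\|.$$
Then Lemma \ref{lem2k}, i.e. $\|v_{i,n}-\tilde v_{i,n}\|\le Q\tau^2$, absorbs the orthonormalization correction, giving $\|e_n^{v_i}\|\le\|e_{n-1}^{v_i}\|+\tau\gamma Q_2(r)E_{n-1}+\tau\gamma Q_2(r)l_0^2+Q\tau^2$, and because $l_0^2=O(\tau)$ the term $\tau\gamma Q_2(r)l_0^2$ is itself $O(\tau^2)$.

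Summing the $x$-estimate and the $k$ direction estimates, and using $\big\|e_{n-1}^x,e_{n-1}^{v_1},\cdots,e_{n-1}^{v_k}\big\|\le E_{n-1}$, produces the recursion $E_n\le(1+Q\tau)E_{n-1}+Q\tau^2$ for $1\le n\le K$, with $Q$ depending only on $\beta,\gamma,r,k,T$ and the data. Since $E_0=0$, the discrete Gronwall inequality gives $E_n\le Q\tau^2\sum_{j=0}^{n-1}(1+Q\tau)^j\le Q\tau\,e^{QT}=Q\tau$, uniformly in $n$ and $K$, which is the asserted bound. I expect the only genuinely delicate parts to be the bookkeeping that keeps every constant uniform -- in particular confirming that the exact and numerical trajectories lie in one common $\mathcal B_r$ so that (\ref{thme1}) applies with a single pair of constants, and checking that the dimer defect enters only through the harmless $l_0^2=O(\tau)$ contribution rather than degrading the order; the remainder is the standard one-step-error-plus-Gronwall machinery, with Lemma \ref{lem2k} supplied precisely to neutralize the Gram--Schmidt step.
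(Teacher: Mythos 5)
Your argument is correct and follows essentially the same route as the paper's: compare the scheme with the reference equations, use the local Lipschitz structure on a common ball, invoke Lemma \ref{lem2k} to absorb the Gram--Schmidt correction, note that $l_{n-1}=l(t_{n-1})$ so the only dimer contribution is the $O(l_0^2)=O(\tau)$ defect, and close with a discrete Gronwall. The only differences are bookkeeping: you quote the already-proved Lipschitz bounds (\ref{thme1}) directly in place of the paper's hand-done splittings such as (\ref{HH}), and you aggregate $\|e^x_n\|+\sum_i\|e^{v_i}_n\|$ into a single quantity $E_n$ so that one application of the discrete Gronwall inequality suffices, whereas the paper first Gronwalls the $x$-error in terms of $\sum_m\|e^v_m\|$, substitutes back, and Gronwalls a second time on $e^v$. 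Both variants yield the same $O(\tau)$ bound with constants uniform in $n,K,\tau$; your single-pass version is slightly cleaner, but it is not a genuinely different proof.
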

\begin{proof} Let 
\begin{equation}\label{ZZ1}
e^x_n:=x(t_n)-x_n,~~e^{v_i}_{n}:=v_i(t_n)-v_{i,n}
\end{equation}
and we subtract the second equation of (\ref{Refk}) from that of (\ref{FDsadk}) and apply the splitting
$$v_i(t_n)-\tilde v_{i,n}=e^{v_i}_n+(v_{i,n}-\tilde v_{i,n})$$
 to obtain
\begin{equation}\label{th1}
\begin{array}{rl}
\ds e^{v_i}_{n}&\hspace{-0.1in}\ds=e^{v_i}_{n-1}+\tau\gamma\big(\hat H(x(t_{n-1}),v_i(t_{n-1}),l(t_{n-1})-\hat H(x_{n-1},v_{i,n-1},l_{n-1})\big)\\[0.05in]
&\hspace{-0.1in}\ds\quad~~-\tau\gamma \big[ v_i(t_{n-1})v_i(t_{n-1})^\top \hat H(x(t_{n-1}),v_i(t_{n-1}),l(t_{n-1}))\\[0.05in]
&\hspace{-0.1in}\ds\qquad\quad-v_{i,n-1}v_{i,n-1}^\top \hat H(x_{n-1},v_{i,n-1},l_{n-1})\big]\\[0.05in]
&\hspace{-0.1in}\ds\quad~~-2\tau\gamma \sum_{j=1}^{i-1}\big[ v_j(t_{n-1})v_j(t_{n-1})^\top \hat H(x(t_{n-1}),v_i(t_{n-1}),l(t_{n-1}))\\[0.05in]
&\hspace{-0.1in}\ds\qquad\quad-v_{j,n-1}v_{j,n-1}^\top \hat H(x_{n-1},v_{i,n-1},l_{n-1})\big]-(v_{i,n}-\tilde v_{i,n})+\tau R_{n}^{v_i} .
\end{array}
\end{equation}
By (\ref{H0}) we bound the first difference on the right-hand side of (\ref{th1})
\begin{equation}\label{HH}
\begin{array}{l}
\ds \big\|\hat H(x(t_{n-1}),v_i(t_{n-1}),l(t_{n-1}))-\hat H(x_{n-1},v_{i,n-1},l_{n-1})\big\|\\[0.05in]
\ds\qquad=\big\|H(x(t_{n-1}))v_i(t_{n-1})-H(x_{n-1})v_{i,n-1}+O(\tau)\big\|\\[0.05in]
\ds\qquad=\big\|H(x(t_{n-1}))(v_i(t_{n-1})-v_{i,n-1})+(H(x(t_{n-1}))-H(x_{n-1}))v_{i,n-1}+O(\tau)\big\|\\[0.05in]
\qquad\ds\leq Q\|e^{v_i}_{n-1}\|+Q\|e^x_{n-1}\|+Q\tau.
\end{array} 
\end{equation}
To generate errors from other differences on the right-hand side of (\ref{th1}), we should introduce several intermediate terms to split them. For instance, the second difference on the right-hand side of (\ref{th1}) could be split as
$$\begin{array}{l}
\ds \|v_i(t_{n-1})v_i(t_{n-1})^\top \hat H(x(t_{n-1}),v_i(t_{n-1}),l(t_{n-1}))-v_{i,n-1}v_{i,n-1}^\top \hat H(x_{n-1},v_{i,n-1},l_{n-1})\|\\[0.05in]
\ds \quad=\|e^{v_i}_{n-1}v_i(t_{n-1})^\top \hat H(x(t_{n-1}),v_i(t_{n-1}),l(t_{n-1}))+v_{i,n-1}(e^{v_i}_{n-1})^\top \hat H(x_{n-1},v_{i,n-1},l_{n-1})\\[0.05in]
\ds\quad\quad+v_{i,n-1}v_{i,n-1}^\top (\hat H(x(t_{n-1}),v_i(t_{n-1}),l(t_{n-1}))-\hat H(x_{n-1},v_{i,n-1},l_{n-1}))\|\\[0.05in]
\ds \quad\leq Q(\|e^{v_i}_{n-1}\|+\|e^x_{n-1}\|+\tau)
\end{array} $$
where we used (\ref{HH}) and the boundedness of $\hat H$ in the last estimate. The other differences on the right-hand side of (\ref{th1}) could be estimated similarly. We incorporate these estimates in (\ref{th1}) and apply Lemma \ref{lem2k} to obtain
\begin{equation}\label{zz1}
\ds\| e^{v_i}_{n}\|\leq \|e^{v_i}_{n-1}\|+Q\tau\big(\|e^x_{n-1}\|+\|e^v_{n-1}\|\big)+Q\tau^2
\end{equation}
where
\begin{equation} \label{ZZ2}
\|e^v_n\|:=\sum_{j=1}^k\|e^{v_j}_{n}\|.
\end{equation}
We then subtract the first equation of (\ref{Refk}) from that of (\ref{FDsadk}) to obtain
\begin{equation*}
\begin{array}{rl}
\ds e^x_n&\ds\hspace{-0.1in}=e^x_{n-1}+\tau\beta(F(x(t_{n-1}))-F(x_{n-1}))-2\tau\beta\sum_{j=1}^k\big[v_j(t_{n-1})v_j(t_{n-1})^\top F(x(t_{n-1}))\\[0.1in]
&\hspace{-0.1in}\ds\quad\quad-v_{j,n-1}v_{j,n-1}^\top F(x_{n-1})\big]+\tau R^x_n \\[0.05in]
&\hspace{-0.1in}\ds=e^x_{n-1}+\tau\beta(F(x(t_{n-1}))-F(x_{n-1}))-2\tau\beta\sum_{j=1}^k\big[e^{v_j}_{n-1}v_j(t_{n-1})^\top F(x(t_{n-1}))\\[0.1in]
&\hspace{-0.1in}\ds\quad\quad+v_{j,n-1}(e^{v_j}_{n-1})^\top F(x(t_{n-1}))+v_{j,n-1}v_{j,n-1}^\top \big(F(x(t_{n-1}))-F(x_{n-1})\big)\big]+\tau R^x_n.
\end{array}
\end{equation*}
Similar to the above derivations, we apply the Assumption $\mathcal A$, the boundedness of $\|F\|$ and $\|R^x_n\|=O(\tau)$ to find
\begin{equation*}
\ds \|e^x_n\|\leq\|e^x_{n-1}\|+Q\tau\|e^x_{n-1}\|+Q\tau\|e^{v}_{n-1}\| +Q\tau^2.
\end{equation*}
Adding this equation from $n=1$ to $m$ yields
\begin{equation*}
\ds \|e^x_m\|\leq Q\tau\sum_{n=1}^m\|e^x_{n-1}\|+Q\tau\sum_{n=1}^m\|e^{v}_{n-1}\| +Q\tau.
\end{equation*}
Then an application of the discrete Gronwall inequality leads to
\begin{equation}\label{zz2}
\|e^x_{n}\|\leq Q\tau\sum_{m=1}^{n-1}\|e^v_{m}\| +Q\tau ,~~1\leq n\leq K. 
\end{equation}
We invoke this equation in (\ref{zz1}) to obtain
\begin{equation*}
\ds\| e^{v_i}_{n}\|\leq \|e^{v_i}_{n-1}\|+Q\tau\|e^v_{n-1}\|+Q\tau^2\sum_{m=1}^{n-1}\|e^v_{m}\| +Q\tau^2.
\end{equation*}
We then sum up this equation for $1\leq i\leq k$ to get
\begin{equation*}
\ds \| e^{v}_{n}\|\leq \|e^{v}_{n-1}\|+Q\tau\|e^v_{n-1}\|+Q\tau^2\sum_{m=1}^{n-1}\|e^v_{m}\| +Q\tau^2.
\end{equation*}
Adding this equation from $n=1$ to $n_*$ leads to
\begin{equation*}
\ds \| e^{v}_{n^*}\|\leq Q\tau\sum_{n=1}^{n_*}\|e^v_{n-1}\|+Q\tau^2\sum_{n=1}^{n_*}\sum_{m=1}^{n-1}\|e^v_{m}\| +Q\tau\leq Q\tau\sum_{n=1}^{n_*}\|e^v_{n-1}\| +Q\tau.
\end{equation*}
 Then an application of the discrete Gronwall inequality again yields
$$\|e^v_n\|\leq Q\tau,~~1\leq n\leq K, $$ 
  and we combine this with (\ref{zz2}) to obtain the estimate of $\|e^x_n\|$, which completes the proof of this theorem.
\end{proof}
\subsection{A second-order accuracy technique}\label{sec42}
In Section \ref{sec41}, we show that the scheme (\ref{FDsadk}) has the first-order accuracy. A useful approach to get the high-order approximations from the low-order ones is the Richardson extrapolation (see e.g., \cite{Com,Qua}), which is a smart combination of numerical solutions of low-order schemes under different partitions to reach high-order accuracy. A typical and simple example is the second-order Richardson extrapolation. Let 
$$\{x_n,v_{1,n},\cdots,v_{k,n}\}_{n=0}^N$$
 and 
 $$\{\bar x_m,\bar v_{1,m},\cdots,\bar v_{k,m}\}_{m=0}^{2N}$$
  be numerical solutions of the first-order scheme (\ref{FDsadk}) with the mesh numbers $K$ and $2K$, respectively. Then the Richardson extrapolation yields the approximation solution 
  $$\{x^R_n,v^R_{1,n},\cdots,v^R_{k,n}\}_{n=0}^K$$
   of second-order accuracy on the coarse mesh defined as
$$x^R_n=2\bar x_{2n}-x_n,~~v^R_{i,n}=2\bar v_{i,2n}-v_{i,n},~~1\leq i\leq k,~~0\leq n\leq K. $$
The analysis of the second-order accuracy is standard and we refer \cite[Section 9.6]{Qua} for details.
\section{Generalized SSD of non-gradient systems}
In many autonomous dynamical systems there exists no energy $E(x)$ such that $F(x)=-\nabla E(x)$, that is, these systems are non-gradient dynamics. In this case, the following SD is developed in \cite{YinSCM} via the Jacobian $J(x)=\nabla F(x)$ to search for the saddle points of non-gradient systems
\begin{equation}\label{ngsadk}
\left\{
\begin{array}{l}
\ds \frac{dx}{dt} =\bigg(I -2\sum_{j=1}^k v_jv_j^\top \bigg)F(x),\\[0.075in]
\ds \frac{dv_i}{dt}=(I-v_iv_i^\top)\nabla F\cdot v_i -\sum_{j=1}^{i-1}v_jv_j^\top(J(x)+J(x)^\top )v_i,~~1\leq i\leq k.
\end{array}
\right.
\end{equation}
If we again employ the dimer method as (\ref{sadk}) to approximate the multiplication of the Jacobian and the vector for efficient implementation, then the following generalized SSD could be derived from (\ref{ngsadk})
\begin{equation}\label{ngsadksd}
\left\{
\begin{array}{l}
\ds \frac{dx}{dt} =\bigg(I -2\sum_{j=1}^k v_jv_j^\top \bigg)F(x),\\[0.075in]
\ds \frac{dv_i}{dt}=(I-v_iv_i^\top)\hat H(x,v_i,l) \\
\ds\qquad-\sum_{j=1}^{i-1}v_j(v_j^\top \hat H(x,v_i,l)+v_i^\top \hat H(x,v_j,l)),~~1\leq i\leq k,\\
\ds \frac{dl}{dt}=-l.
\end{array}
\right.
\end{equation}

 Compared with the SSD (\ref{sadk}), a symmetrization $v_j^\top \hat H(x,v_i,l)+v_i^\top\hat H(x,v_j,l)$ is used to replace $2v_j^\top \hat H(x,v_i,l)$ in the dynamics of $v_i$ in response to the asymmetry of $\nabla F$. Similar to Section \ref{secnumer}, the corresponding numerical scheme to (\ref{ngsadksd}) reads
 \begin{equation}\label{ngFDsadk}
\left\{
\begin{array}{l}
\ds x_{n} =x_{n-1}+\tau\bigg(I -2\sum_{j=1}^k v_{j,n-1}v_{j,n-1}^\top \bigg)F(x_{n-1}),\\[0.075in]
\ds \tilde v_{i,n}=v_{i,n-1}+\tau\big( I-v_{i,n-1}v_{i,n-1}^\top\big)\hat H(x_{n-1},v_{i,n-1},l_{n-1})\\[0.1in]
\ds\hspace{0.8in}-\tau\sum_{j=1}^{i-1}v_{j,n-1}\big(v_{j,n-1}^\top \hat H(x_{n-1},v_{i,n-1},l_{n-1})\\[0.2in]
\ds\hspace{0.9in}+v_{i,n-1}^\top \hat H(x_{n-1},v_{j,n-1},l_{n-1})\big),~~1\leq i\leq k,\\[0.075in]
\ds \{v_{i,n}\}_{i=1}^k=\text{GS}(\{\tilde v_{i,n}\}_{i=1}^k).
\end{array}
\right.
\end{equation}
We may follow the preceding proofs to analyze the scheme (\ref{ngFDsadk}). However, a key difference that may lead to the failure of recycling the developed ideas and techniques lies in the estimate (\ref{mhback}) of $\tilde v_{m,n}^\top \tilde v_{i,n}$ for $1\leq m< i\leq k$, which is delicate as we require  $O(\tau^2)$ accuracy. Therefore, we reestimate this term for the scheme (\ref{ngFDsadk}) as follows
\begin{equation*}
\begin{array}{l}
\ds \tilde v_{m,n}^\top \tilde v_{i,n}=\tau\big[v_{m,n-1}^\top \hat H(x_{n-1},v_{i,n-1},l_{n-1})-\big(v_{m,n-1}^\top \hat H(x_{n-1},v_{i,n-1},l_{n-1})\\[0.075in]
\ds\hspace{0.3in}+v^\top_{i,n-1}\hat H(x_{n-1},v_{m,n-1},l_{n-1})\big)+\big(\hat H(x_{n-1},v_{m,n-1},l_{n-1})\big)^\top v_{i,n-1}\big]+O(\tau^2)=O(\tau^2)
\end{array}
\end{equation*}
 where we used the observation that the content in $[\cdots]$ in the last-but-one equality is exactly $0$ by virtue of the symmetrization. The other proofs could be performed in parallel to prove first-order accuracy for all variables in the numerical scheme (\ref{ngFDsadk}) of the generalized SSD (\ref{ngsadksd}) for non-gradient systems, and the Richardson extrapolation proposed in Section \ref{sec42} could also be employed to obtain the approximate solutions of second-order accuracy. 
 
\section{Numerical experiments}
In this section, we carry out numerical experiments to substantiate the accuracy of the numerical schemes (\ref{FDsadk}) and (\ref{ngFDsadk}). For applications of these schemes in practical problems, we refer \cite{YinSISC,YinSCM} for various physical examples and detailed discussions. As the exact solutions to the dynamics are not available, numerical solutions computed under $\tau=2^{-13}$ serve as the reference solutions. In the following examples, we set $\beta=\gamma=T=1$ for simplicity and denote the convergence rate by CR. The errors $\|e^x_n\|$ and $\|e^v_n\|$ measured in the experiments are defined in (\ref{ZZ1}) and (\ref{ZZ2}), and we further define the norms
$\|e_n^{R,x}\|:=\|x(t_n)-x^R_n\|$ and $\|e^{R,v}_n\|:=\sum_{j=1}^k\|v_j(t_n)-v^R_{j,n}\|$ 
for the errors of the Richardson extrapolation. In all experiments, $l_0$ is chosen as $\sqrt{\tau}$.

\subsection{First-order scheme for gradient system}\label{sec61}
We consider the SSD (\ref{sadk}) for the stingray function $
 E(x_1,x_2)=x_1^2+(x_1-1)x_2^2
$ \cite{Gra}
and compute its index-1 and index-2 saddle points via scheme (\ref{FDsadk}) with the initial conditions 
$x_0=(1,1)^\top,~~v_0=(0,1)^\top$
 and
  $x_0=(1,1)^\top,~~v_{1,0}=(0,1)^\top,~~v_{2,0}=(1,0)^\top,$
  respectively. Numerical results are presented in Tables \ref{table1:1}-\ref{table1:2}, which demonstrate the first-order accuracy of the numerical scheme (\ref{FDsadk}) as proved in Section \ref{secconv}.
\begin{table}[htb]
\setlength{\abovecaptionskip}{0pt}
\centering
\caption{Convergence of (\ref{FDsadk}) for finding an index-1 saddle point in Example 1.}
\vspace{0.25em}	
\begin{tabular}{ccccc} \cline{1-5}
$1/\tau$& $\max_n\|e^x_n\|$ & CR &  $\max_n\|e^v_n\|$ &CR\\ \cline{1-5}		
$2^5$&	2.60E-02&		&1.91E-02	&\\
$2^6$&	1.23E-02&	1.08&	9.22E-03&	1.05\\
$2^7$&	5.98E-03&	1.05&	4.51E-03&	1.03\\
$2^8$&	2.91E-03&	1.04&	2.20E-03&	1.03\\

				\hline
			\end{tabular}
			\label{table1:1}
		\end{table}

\begin{table}[htb]
\setlength{\abovecaptionskip}{0pt}
\centering
\caption{Convergence of (\ref{FDsadk}) for finding an index-2 saddle point in Example 1.}
\vspace{0.25em}	
\begin{tabular}{ccccc} \cline{1-5}
$1/\tau$& $\max_n\|e^x_n\|$ & CR &  $\max_n\|e^v_n\|$ &CR\\ \cline{1-5}		
$2^5$&	1.50E-02&	&	3.90E-02	&\\
$2^6$&	7.41E-03&	1.02&	1.90E-02&	1.04\\
$2^7$&	3.66E-03&	1.02&	9.30E-03&	1.03\\
$2^8$&	1.79E-03&	1.03&	4.55E-03&	1.03\\
				\hline
			\end{tabular}
			\label{table1:2}
		\end{table}

\subsection{First-order scheme for non-gradient system}\label{sec62}
We consider the following (non-gradient) dynamical system
$$\frac{dx}{dt}=
\left[
\begin{array}{ccc}
 1& 0.5 &0\\
 -0.5& 1 &-0.3\\
 0 &-0.2& 1
\end{array}
\right]x+
\left[
\begin{array}{c}
 (1+(x_1-1)^2)^{-1}\\
(1+(x_2-2)^2)^{-1} \\
 (1+(x_3+1)^2)^{-1}
\end{array}
\right] $$
and use the generalized SSD (\ref{ngFDsadk}) to compute the index-1 and index-2 saddle points of this dynamical system with the initial conditions 
$x_0=(-1,1,0)^\top,~~v_0=(-1,0,0)^\top$
 and
  $x_0=(-1,1,0)^\top,~~v_{1,0}=\frac{1}{\sqrt{2}}(-1,1,0)^\top,~~v_{2,0}=\frac{1}{\sqrt{2}}(1,1,0)^\top,$
respectively. Numerical results are presented in Tables \ref{table2:1}-\ref{table2:2}, which again show the first-order accuracy of the scheme (\ref{ngFDsadk}).

\begin{table}[htb]
\setlength{\abovecaptionskip}{0pt}
\centering
\caption{Convergence of (\ref{ngFDsadk}) for finding an index-1 saddle point.}
\vspace{0.25em}	
\begin{tabular}{ccccc} \cline{1-5}
$1/\tau$& $\max_n\|e^x_n\|$ & CR &  $\max_n\|e^v_n\|$ &CR\\ \cline{1-5}	
$2^5$&	4.95E-02&		&9.32E-03	&\\
$2^6$&	2.50E-02&	0.98&	4.64E-03&	1.00\\
$2^7$&	1.25E-02&	1.00	&2.30E-03	&1.01\\
$2^8$&	6.19E-03&	1.02&	1.13E-03&	1.02\\
				\hline
			\end{tabular}
			\label{table2:1}
		\end{table}

\begin{table}[htb]
\setlength{\abovecaptionskip}{0pt}
\centering
\caption{Convergence of (\ref{ngFDsadk}) for finding an index-2 saddle point.}
\vspace{0.25em}	
\begin{tabular}{ccccc} \cline{1-5}
$1/\tau$& $\max_n\|e^x_n\|$ & CR &  $\max_n\|e^v_n\|$ &CR\\ \cline{1-5}		
$2^5$&	3.00E-02&		&1.02E-02	&\\
$2^6$&	1.50E-02&	1.01&	5.08E-03&	1.00\\
$2^7$&	7.42E-03&	1.01&	2.52E-03&	1.01\\
$2^8$&	3.65E-03&	1.02&	1.24E-03&	1.02\\
				\hline
			\end{tabular}
			\label{table2:2}
		\end{table}

\subsection{Second-order scheme for gradient and non-gradient systems}

We test the convergence rates of the Richardson extrapolation technique proposed in Section \ref{sec42} by the same examples in Sections \ref{sec61}--\ref{sec62} and numerical results are presented in Tables \ref{table3:1}-\ref{table3:4}, which show the second-order accuracy of the Richardson extrapolation.

\begin{table}[htb]
\setlength{\abovecaptionskip}{0pt}
\centering
\caption{Convergence of Richardson extrapolation for finding an index-1 saddle point of the gradient system.}
\vspace{0.25em}	
\begin{tabular}{ccccc} \cline{1-5}
$1/\tau$& $\max_n\|e^{R,x}_n\|$ & CR &  $\max_n\|e^{R,v}_n\|$ &CR  \\ \cline{1-5}	
$2^5$&	1.45E-03&		&5.49E-04	&\\
$2^6$&	3.46E-04&	2.07 &	1.34E-04&	2.03\\ 
$2^7$&	8.43E-05&	2.04 &	3.31E-05&	2.02 \\
$2^8$&	2.08E-05&	2.02 &	8.22E-06&	2.01 \\
				\hline
			\end{tabular}
			\label{table3:1}
		\end{table}

\begin{table}[htb]
\setlength{\abovecaptionskip}{0pt}
\centering
\caption{Convergence of Richardson extrapolation for finding an index-2 saddle point of the gradient system.}
\vspace{0.25em}	
\begin{tabular}{ccccc} \cline{1-5}
$1/\tau$& $\max_n\|e^{R,x}_n\|$ & CR &  $\max_n\|e^{R,v}_n\|$ &CR\\ \cline{1-5}		
$2^5$&	3.39E-04&		&9.79E-04	&\\
$2^6$&	8.32E-05&	2.03 &	2.41E-04&	2.02\\ 
$2^7$&	2.06E-05&	2.01 &	5.97E-05&	2.01 \\
$2^8$&	5.13E-06&	2.01 &	1.49E-05&	2.01 \\
				\hline
			\end{tabular}
			\label{table3:2}
		\end{table}
		
\begin{table}[htb]
\setlength{\abovecaptionskip}{0pt}
\centering
\caption{Convergence of Richardson extrapolation for finding an index-1 saddle point of the non-gradient system.}
\vspace{0.25em}	
\begin{tabular}{ccccc} \cline{1-5}
$1/\tau$& $\max_n\|e^{R,x}_n\|$ & CR &  $\max_n\|e^{R,v}_n\|$ &CR \\ \cline{1-5}	
$2^5$&	9.54E-04&		&1.43E-04	&\\
$2^6$&	2.45E-04&	1.96&	3.52E-05&	2.02\\
$2^7$&	6.20E-05&	1.98&	8.71E-06&	2.01\\
$2^8$&	1.56E-05&	1.99&	2.17E-06&	2.01\\
				\hline
			\end{tabular}
			\label{table3:3}
		\end{table}

\begin{table}[htb]
\setlength{\abovecaptionskip}{0pt}
\centering
\caption{Convergence of Richardson extrapolation for finding an index-2 saddle point of the non-gradient system.}
\vspace{0.25em}	
\begin{tabular}{ccccc} \cline{1-5}
$1/\tau$& $\max_n\|e^{R,x}_n\|$ & CR &  $\max_n\|e^{R,v}_n\|$ &CR\\ \cline{1-5}		
$2^5$&	1.43E-04&		&1.53E-04	&\\
$2^6$&	3.55E-05&	2.01&	3.86E-05&	1.99\\
$2^7$&	8.87E-06&	2.00	&9.69E-06&	1.99\\
$2^8$&	2.21E-06&	2.00	&2.42E-06&	2.00\\
				\hline
			\end{tabular}
			\label{table3:4}
		\end{table}

\section{Conclusions}
Finding the saddle points of complicated systems has attracted an increasing interest in recent decades. In particular, the SSD serves as an efficient numerical algorithm to compute any-index saddle points and has been widely used to construct the solution landscapes of varied energy and dynamical systems. In this paper we prove the boundedness of the exact solutions and optimal-order error estimates of the numerical discretization to the SSD with respect to the time step size. We overcome the main difficulties of dealing with the local Lipschitz assumptions, the dimer approximation, and the strong nonlinearity of the SSD. We further employ the Richardson extrapolation to obtain the approximate solution with second-order accuracy. The derived analysis and numerical results provide mathematical and numerical supports for the computations of saddle points. In future works, we will investigate how to relax or eliminate the restrictions like (\ref{beta}) on the parameters to improve the analysis.

\section*{Acknowledgments} 
This work was partially supported by the National Natural Science Foundation of China No.~12050002 and 21790340; the National Key R\(\&\)D Program of China No.~2021YFF1200500; the International Postdoctoral Exchange Fellowship Program (Talent-Introduction Program) No.~YJ20210019; the China Postdoctoral Science Foundation No.~2021TQ0017 and 2021M700244.


\end{document}